\newcommand{\N}{\mathbb{N}}
\newtheorem{theorem}{Theorem}
\newtheorem{lemma}[theorem]{Lemma}
\newtheorem{corollary}[theorem]{Corollary}
\newtheorem{prop}[theorem]{Proposition}
\theoremstyle{definition}
\newtheorem{definition}[theorem]{Definition}
\newtheorem{remark}{Remark}
\let\orgdescriptionlabel\descriptionlabel
\renewcommand*{\descriptionlabel}[1]{%
  \let\orglabel\label
  \let\label\@gobble
  \phantomsection
  \edef\@currentlabel{#1}%
  \let\label\orglabel
  \orgdescriptionlabel{#1}%
}
\renewcommand{\P}{\mathbb{P}}
\newcommand{\x}{\boldsymbol{x}}
\newcommand{\y}{\boldsymbol{y}}
\newcommand{\1}{\mathbbm{1}}
\newcommand{\G}{\mathscr{G}}
\newcommand{\E}{\mathbb{E}}
\newcommand{\R}{\mathbb{R}}
\renewcommand{\d}{\mathrm{d}}
\newcommand{\z}{\mathbf{z}}
\newcommand{\w}{\boldsymbol{w}}
\newcommand{\Z}{\mathbb{Z}}
\renewcommand{\d}{\operatorname{d}}
\newcommand{\scrV}{\mathscr{V}}
\newcommand{\scrE}{\mathscr{E}}
\newcommand{\calE}{\mathcal{E}}
\renewcommand{\L}{\mathcal{L}}
\newcommand{\scrG}{\mathscr{G}}
\title{All spatial random graphs with weak long-range effects have chemical distance comparable to Euclidean distance}
\author{
Lukas L\"{u}chtrath 
	\orcidlink{0000-0003-4969-806X}
	\thanks{
		Weierstrass Institute for Applied Analysis and Stochastics, Mohrenstr.\ 39, 10117 Berlin, Germany. 
		\\ 
		\phantom{E-m }E-mail: lukas.luechtrath@wias-berlin.de
	}
}
\date{November 16, 2025}
\begin{document}
\maketitle

\begin{spacing}{0.9} 
	\begin{abstract} 
		\noindent This note provides a sufficient condition for linear lower bounds on chemical distances (compared to the Euclidean distance) in general spatial random graphs. The condition is based on the scarceness of long edges in the graph and weak correlations at large distances and is valid for all translation invariant and locally finite graphs that fulfil these conditions. We apply the result to various examples, thereby confirming a conjecture on graph distances in the heavy-tailed Boolean model posed by Hirsch [Braz.\ J.\ Probab.\ Stat.\ (2017)]. The proof is based on a renormalisation scheme introduced by Berger [arXiv: 0409021 (2004)]. 

	\smallskip
		\noindent\footnotesize{{\textbf{AMS-MSC 2020}: Primary: 60K35; Secondary: 90B15, 05C80}

	\smallskip
		\noindent\textbf{Key Words}: Graph distances, spatial random graphs, Boolean model, weight-dependent random connection model, strong decay regime, polynomial correlations, long-range percolation}
	\end{abstract}
\end{spacing}

\section{Introduction and statement of results}
A fundamental question in percolation theory concerns the scaling relationship between the graph distance, often referred to as the \emph{chemical distance}, and the Euclidean distance between two vertices. Understanding this relationship provides insights into the geometric and structural properties of the infinite cluster in supercritical percolation models. Models for which the chemical distances were studied are for instances classical Bernoulli percolation~\cite{AntalPisztora96,Garet_Marchand_2007_chmical}, its continuum analogue~\cite{YaoChenGuo2011}, random interlacements~\cite{CernyPopov_2012}, and the level sets of the Gaussian free field~\cite{DrewitzEtAl_2014_chemDist}. Although some of the models have long-range interactions, these models have in common that edges are of bounded length, and thus the chemical distance of two distant vertices of the infinite component is typically of the same order as their Euclidean distance. Conversely, introducing edges of unbounded lengths can lead to drastically different scaling behaviours, and chemical distances may depend logarithmically or even iterated logarithmically on the Euclidean distances~\cite{Biskup2004, DeijfenHofstadHooghiemstra2013, GGM22, LakisEtal_2024_chemical}. This property may be seen as a spatial version of the famous (ultra) \emph{small world} property of complex networks~\cite{ChungLu2006}. However, unbounded edge lengths do not always lead to significantly shorter graph distances. Consider for instance long-range percolation~\cite{Schulman1983}, in which each pair of lattice sites is connected by an edge with a probability that decays polynomially with power \(-d\delta\) in the vertices' distance. If \(1<\delta<2\), then the graph distance between two distant vertices \(x,y\) is of order \(\log^\Delta|x-y|\) for \(\Delta=1/\log_2(2/\delta)\)~\cite{Biskup2004}, while for \(\delta>2\) the graph distance is given by a linear function of the Euclidean distance~\cite{berger2004,Baeumler2023_continuity}. The reason for this dramatic change of behaviour is simply that for \(\delta>2\) long edges are too rare to give a significant advantage over the bounded edge lengths model.  

This note builds on the work~\cite{berger2004} and demonstrates that such linear scaling is a general feature of spatial graphs where long edges are sufficiently rare. Using the framework of~\cite{jacobJahLu2024}, which relates long edges to subcritical annulus-crossing probabilities in graphs with weak correlations, we establish linear lower bounds on the chemical distance under broad assumptions. Furthermore, we demonstrate that the probability of shorter chemical distances at large distances decays at the same rate at which long edges are present, which also provides a new quantitative result for long-range percolation. 

\paragraph{Framework.}
We use the framework of~\cite{jacobJahLu2024} and consider general translation-invariant models defined on some appropriate probability space whose probability measure we denote by \(\P\). We aim to study a countably infinite random graph \(\G=(\mathscr{V},\mathscr{E})\) whose vertex set \(\scrV\) is built on the points of a suitable point process on \(\R^d\). We make the following assumptions throughout:
\begin{description}
 	\item[(G1)\label{G:Point_process}] 
 		The locations of the vertices in \(\scrV\) are given by a \emph{simple point process} that is \emph{translation invariant}. Note that this refers to the location of the vertices only and the vertices may carry additional marks or weights. In fact, in most examples, vertex marks are used to model a vertex' attraction or influence in some way. Throughout the manuscript, we denote vertices by \(\x\in\scrV\). For a vertex \(\x\in\scrV\), we denote its location by \(x\in\R^d\). Although \(\scrV\) refers to the whole vertex set and may contain additional markings, we still write \(\x\in\scrV\cap A\) for a vertex \(\x\) with location \(x\in A\subset\R^d\).  
 	\item[(G2)\label{G:Translation}] 
 		We assume that \(\G\) is \emph{translation invariant}. That is, \(\G\) and \(\G+x\) have the same distribution, where \(\G+x\) is the graph constructed on the points of \(\scrV+x\), i.e., the connection mechanism does not change if the location of each vertex is shifted. 	
 	\item[(G3)\label{G:locallyFinite}] 
 		The graph \(\scrG\) is \emph{locally finite}, i.e., all vertices have finite degree, almost surely. 
 \end{description}

\begin{remark} ~\
	\begin{enumerate}[(a)]
		\item Our setup essentially includes two types of underlying vertex locations that can simultaneously be dealt with. First, standard stationary point processes, i.e., processes with intensity measure given by the Lebesgue measure multiplied by an intensity parameter \(\lambda>0\)~\cite{LastPenrose2017}. Secondly, locations that are based on some site-percolation process on the lattice \(\Z^d\). In this case, we define \(\lambda=\P(o\in\scrV)\) where \(o\) denotes the origin. In both cases, \(\lambda\) denotes a classical percolation parameter. As studying the graph distance of far apart vertices is only reasonable if there is an infinite connected component containing both vertices, one may always assume that \(\lambda\) is chosen large enough to guarantee supercriticality. However, we only prove a lower bound on the graph distance so the two vertices being in the same component is not strictly necessary. Therefore, \(\lambda\) plays no particular role in the following and we thus drop it from the notation. 
		\item The two standard examples for the underlying point process that are most frequently used in the literature are the homogeneous Poisson point process and the Bernoulli site-percolated lattice. Other examples are discussed in~\cite{jacobJahLu2024}.
	\end{enumerate}
\end{remark}

\paragraph{Quantifying long-range effects.}
Next, we formulate the properties that are crucial for our main result, which are subject to the long-range effects of the graph. Typically, there are two ways in which these interactions can arise. One is through the presence of long edges in the graph that connect vertices far apart from each other. The other is via correlations of the local configurations of the graph across distant regions. While the amount of long edges is most relevant in order to deduce the typical graph distances' scale, we still require some control over the influence of the latter type on the graph topology. To employ our proof, we require both effects to vanish on a polynomial scale. More precisely, {write \(\Lambda_m(x)=x+[-m/2,m/2)^d\) for the box of side length \(m\), centred in \(x\), and} let \(\Lambda_m(o)\) and \(\Lambda_m(m x)\) be two such boxes, centred in the origin \(o\) and in \(m x\) for some \(|x|>2\) respectively. Note that the two boxes are at distance at least \(m\). Let \(\calE(\Lambda_m(o))\) be a \emph{local event}, meaning that it can be decided by the realisation of the internal vertices and edges of \(\Lambda_m(o)\) alone. { Put differently, \(\calE(\Lambda_m(o))\) is a local event if
	\[
		\E[\1_{\calE(\Lambda_m(o))}\mid \scrG\cap \Lambda_m(o)] \in\{0,1\}.
	\]
	A standard example for a local event is the existence of a connected component of a certain size within the box \(\Lambda_m\).} We say that \(\scrG\) is \emph{polynomially mixing} with mixing exponent \(\xi<0\) if, {for every local event \(\calE(\cdot)\)}, there exists a suitable constant \(C_\text{mix}>0\) such that for sufficiently large \(m\), 
    \[ \tag{\({{\mathscr{P}}\!\!{\mathscr{M}}}\)} \label{G:Pmix}
    	{\sup_{|x|>2}}\operatorname{Cov}\big(\1_{\calE(\Lambda_m(o))},\1_{\calE(\Lambda_m(mx))}\big)\leq C_\text{mix} \,  m^{\xi}.
    \]
    If \(\scrG\) is polynomially mixing with exponent \(\xi\), we also say that \(\scrG\) has Property~\ref{G:Pmix}\(^\xi\). {Note that the constant \(C_\text{mix}\) may apriori depend on the event but, importantly, it does not depend on \(m\). In practice, we apply this property only to the `bad box’ event used in the renormalisation scheme of Section~\ref{sec:proofMain}, so we may take a single constant throughout.}
    
    Let us further quantify the occurrence of long edges. To this end, define, for \(n,m\in\N\), the event 
 \[
    \L(m,n):= \big\{\exists \x\sim \y: x,y\in\Lambda_m(o) \text{ and }|x-y|>n\big\},
 \]
 where \(\x\sim \y\) denotes the existence of an edge connecting the vertices \(\x\) and \(\y\). Therefore, \(\L(m,n)\) describes the occurrence of long edges in a box. Henceforth, we say that \(\scrG\) has the `no long edges' Property~\ref{G:NoLongE}\(^\mu\), if there exists an exponent \(\mu<-d\) and a constant \(C_\L>0\) such that for all \(m\geq 1\) and sufficiently large \(n\), we have 
 \begin{equation*}\tag{\(\mathscr{P}\!\!\mathscr{L}\)}\label{G:NoLongE}
 	\P(\L(m,n))\leq C_\L m^d n^{\mu}.
 \end{equation*}
 
\paragraph{Main result.} 
We denote the graph distance in the graph \(\G\) by \(\operatorname{d}:=\operatorname{d}_\G\), i.e., 
\[
	\operatorname{d}(\x,\y):=\operatorname{d}_\G(\x,\y)=\min\big\{m: \exists \text{ path }\x=\x_0\sim \x_1\sim\dots\sim \x_m=\y \text{ in } \G\big\} \quad \text{ for } \x,\y\in\G,
\]
with the usual convention \(\min\emptyset=\infty\). To formulate our main theorem, let us define the event
\[
 \mathcal{D}_{L}^\eta(m) = \big\{\operatorname{d}(\x,\y)\geq \eta|x-y|, \text{ for all } {\x\in\Lambda_L (o)\text{ and }\y\in\Lambda_{m}^\mathsf{c}(o)}\big\},
\]
stating that all paths connecting a vertex located relatively close to the origin to a faraway vertex have length lower bounded by a constant multiple of the Euclidean distance of their end-vertices.

\medskip

\begin{theorem}[Linear graph distances, lower bound]\label{thm:main}
    If \(\G=(\scrV,\scrE)\) has the Properties~\ref{G:Pmix}\(^\xi\) and~\ref{G:NoLongE}\(^\mu\), { for some \(\xi<0\) and \(\mu<-d\),} then there exists a constant \(\eta>0\), depending only on model parameters, such that for all \(L\in\N\), 
    \begin{equation}\label{eq:mainTail}
    	\limsup_{m\to\infty}\frac{\log \P\big(\neg\mathcal{D}^\eta_{L}(m)\big)}{\log m} \leq \xi \vee (d+\mu).
    \end{equation}
\end{theorem}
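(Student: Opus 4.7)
Following Berger's renormalisation scheme~\cite{berger2004}, the strategy is to show that any short path from $\Lambda_L$ to $\Lambda_{Mm}^\mathsf{c}$ must either traverse an unlikely abundance of long edges, which is ruled out by Property~\ref{G:NoLongE}\(^\mu\), or exploit an unlikely correlation between distant regions, ruled out by Property~\ref{G:Pmix}\(^\xi\). These two contributions are precisely what produce the two terms on the right-hand side of~\eqref{eq:mainTail}. The first step is to fix a large scale $n$ (depending on the model constants and on $\xi,\mu$) and partition $\R^d$ into the grid $\{\Lambda_n(z) : z \in n\Z^d\}$. A box is called \emph{good} if no edge of $\G$ with an endpoint in (a fixed inflation of) it has Euclidean length exceeding $n$. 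Goodness is then a local event, so Property~\ref{G:Pmix}\(^\xi\) controls its correlations at distance $\gg n$, while Property~\ref{G:NoLongE}\(^\mu\) yields $\P(\Lambda_n(z)\text{ is bad})\leq Cn^{d+\mu}$. The key deterministic observation is that any path $\x_0\sim\x_1\sim\dots\sim\x_k$ whose successive vertices all lie in good boxes satisfies $|x_{i+1}-x_i|\leq n$, whence $k\geq |x_0-x_k|/n$. Choosing $\eta=1/n$ and $M$ large enough that the relevant portion of any path between $\Lambda_L$ and $\Lambda_{Mm}^\mathsf{c}$ is captured inside a box of side comparable to $Mm$, this already handles the event that every box intersected by the path is good.

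The second step, which is the heart of the argument, is to bound the probability that the geodesic is forced through a bad box. A single bad box may contain an edge of length $\ell\gg n$, transporting the path a long way in one step, but this forces a region of diameter comparable to $\ell$ to itself look atypical. Iterating this observation along a sequence of scales $n=n_0<n_1<n_2<\dots$ with a fixed geometric ratio produces a hierarchical cascade: a geodesic from $\Lambda_L$ to $\Lambda_{Mm}^\mathsf{c}$ of length less than $\eta|x-y|$ forces at some scale $n_k\leq O(m)$ a box of that size inside $\Lambda_{Mm}$ to contain an edge of length at least $n_k$. At each scale, Property~\ref{G:NoLongE}\(^\mu\) gives a per-box bad probability of $C n_k^{d+\mu}$, and a union bound over scales and the $O((m/n_k)^d)$ positions inside $\Lambda_{Mm}$ produces an overall contribution of order $m^{d+\mu}$ up to logarithmic factors. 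Property~\ref{G:Pmix}\(^\xi\) is then used to decorrelate the good/bad statuses of distant boxes at each fixed scale, and the accumulated decorrelation error contributes the polynomial factor $m^{\xi}$. Taking the maximum of the two polynomial rates yields~\eqref{eq:mainTail}.

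The main obstacle I anticipate is the multiscale cascade in the second step, specifically the combinatorics of identifying which bad configurations can actually be chained by a single geodesic and the careful bookkeeping of the decorrelation errors introduced by applying Property~\ref{G:Pmix}\(^\xi\) at each scale. In Berger's original long-range percolation setting, the lattice structure and the explicit product form of the edge-occupancy probabilities simplify this count considerably; in the present generality one only has the abstract mixing coefficient $\xi$ to work with, and this is precisely what forces the appearance of $\xi$ alongside the geometric term $d+\mu$ in the final rate. A subsidiary issue is to choose the inflation factors in the definition of goodness so that Property~\ref{G:NoLongE}\(^\mu\) can be applied at the enlarged scales while keeping the constants uniform across the hierarchy.
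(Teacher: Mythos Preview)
Your proposal has a genuine gap in the second step. The claimed deterministic implication --- that a geodesic shorter than $\eta|x-y|$ ``forces at some scale $n_k\leq O(m)$ a box of that size inside $\Lambda_{Mm}$ to contain an edge of length at least $n_k$'' --- is false as stated. A short path can exist without any single long edge at a macroscopic scale: it suffices that many boxes at the \emph{smallest} scale $n_0$ are bad and happen to be aligned along the path, each contributing an edge of length just exceeding $n_0$. Your non-recursive notion of goodness does not propagate this information upward, so the multiscale union bound you describe has nothing to bite on. (And even arithmetically, a union bound over $(m/n_k)^d$ positions each bad with probability $Cn_k^{d+\mu}$ sums to $m^d\sum_k n_k^{\mu}=O(m^d)$, not $O(m^{d+\mu})$.)

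The missing idea is to define goodness \emph{recursively}: a stage-$n$ box is good if it contains no edge longer than $K_{n-1}/100$ \emph{and} all but at most $3^d$ of its stage-$(n-1)$ sub-boxes are themselves good. This has two consequences. Deterministically, one can show (by induction on $n$) that inside a good stage-$n$ box every path covering a distance $\gtrsim K_{n-1}$ has length $\geq C|x-y|$, because the few bad sub-boxes can only swallow a bounded portion of the path and the remaining good segments are handled by the inductive hypothesis. Probabilistically, if more than $3^d$ sub-boxes are bad then by pigeonhole two of them are at distance $\geq K_{n-1}$, and Property~\ref{G:Pmix}$^\xi$ gives
\[
\psi(n)\;\lesssim\;\P(\L_n)\;+\;n^{4d}\big(\psi(n-1)^2+C_{\mathrm{mix}}K_{n-1}^{\xi}\big).
\]
It is precisely the \emph{squaring} of $\psi(n-1)$, not a union bound, that drives $\psi(n)$ down to the rate $(n!)^{\xi\vee(d+\mu)}$; and for the recursion to close one needs super-geometric scales $K_n=K(n!)^2$ rather than the fixed geometric ratio you propose. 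Finally, the event $\neg\mathcal{D}^\eta_{L,M}(m)$ is contained in $\{B_k\text{ bad for some }k\geq n-1\}$ where $n!\sim m$, and summing $\psi(k)$ over $k\geq n-1$ gives the claimed rate.
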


\medskip

	Let us, for the moment, assume that \(d+\mu>\xi\). Then, Theorem~\ref{thm:main} essentially states that the probability of finding a path that leaves a box of side length \(m\) with `only a few steps' decays at least at the same rate as \(\P(\L(m,m))\), which is the probability of having at least one edge of length \(m\) in said box. Put differently, either all paths contain a number of edges proportional to the box' side length, or there is one single edge spanning the whole Euclidean distance alone. A similar behaviour was observed in~\cite{jacobJahLu2024}, where the existence of a subcritical annulus-crossing phase, i.e.\	
	\[
		\widehat{\lambda}_c:=\inf\big\{\lambda>0: \lim_{m\to\infty}\P(\text{annulus with radii }m \text{ and }2m \text{ is crossed by a path})>0\big\}>0,
	\]
	was investigated. In that article, the authors consider the `long edges' event \(\L(m):=\L(m,m)\) and show that, if \(\P(\L(m))\to 0\) as \(m\to\infty\) uniformly in small intensities \(\lambda\), then \(\widehat{\lambda}_c>0\), provided the model mixes in the sense that the defining covariance in~\eqref{G:Pmix} tends uniformly to zero. Conversely, if the convergence of \(\P(\L(m))\) does not hold for any intensity \(\lambda\), then there is no such phase, and annuli are typically crossed by a single long edge. The reason we require a slightly finer version of the `long edges' event and the polynomial decay lies in the renormalisation scheme of Berger~\cite{berger2004}, our proof is based on. In order to employ it, we must exclude the existence of long edges that are on a smaller scale than the boxes in which they are contained. However, both versions of the event are closely related. Clearly, Property~\ref{G:NoLongE}\(^\mu\) implies polynomial decay of \(\P(\L(m))\), and furthermore, polynomial decay of \(\P(\L(m))\) also implies Property~\ref{G:NoLongE}\(^\mu\) in all our examples below. Let us additionally note that, except for boundary cases, typical models of our interest either satisfy~\eqref{G:NoLongE} with a polynomial decay or contain many long-edges. 	
	
	Although the decay of the mixing term of Property~\ref{G:Pmix}\(^\xi\) could also be the dominant term in~\eqref{eq:mainTail}, we believe this to be a technical result and only the presence of long edges is truly significant. It is demonstrated in~\cite{DrewitzEtAl_2014_chemDist} that models with polynomial correlations, which are monotone in the intensity, can still exhibit \emph{weak correlations for monotone events} in the sense that monotone events in disjoint regions are independent after a sprinkling, depending on the Euclidean distance of the regions, with a stretched exponential error term. If the required sprinkling is summable over the scales, one should be able to adapt our proof in a way that the long edges always yield the dominant contribution. Note, however, that this property must be proved individually for each model, which is why we work with the weaker assumption of polynomially bounded correlations. 
	
	Finally, if one sets \(a=2/|\xi\vee (d+\mu)|\) and \(m_k=\lceil k^a\rceil\), then \(\P(\neg\mathcal{D}_{L}(m_k))\) is summable by Theorem~\ref{thm:main}. Thus, by the Borel-Cantelli lemma, almost surely, all paths connecting the vertices in \(\Lambda_L\) to a large distance become bounded from below by a constant multiple of their end-vertices' Euclidean distance eventually. This recovers the original result of Berger for long-range percolation~\cite{berger2004} and extends it to the whole class of models that satisfy our assumptions.   
	 	 
\begin{remark}[On the upper bound] \label{rem:upper} 	
	Theorem~\ref{thm:main} only establishes a linear lower bound on graph distances in terms of the Euclidean distance. The natural question is whether there exists a corresponding upper bound. {We believe this to be true for percolation models that are sufficiently decorrelated at large distances; in particular those discussed in Section~\ref{sec:Examples}. } However, deriving this is generally challenging. Let us only consider models constructed on either a Poisson process or a site-percolated lattice, in which edges do only depend on their end vertices to ensure a sufficient amount of independence. For long-range percolation on the lattice, the upper bound and the corresponding shape-theorem were only recently proven in~\cite{Baeumler2023_continuity}. The proof relies on the continuity of the critical value, a property that is generally hard to prove. This continuity allows the removal of edges longer than an appropriate threshold, hence approximating the graph by one with bounded edge lengths. For this approximating graph, one can adapt the established results of~\cite{AntalPisztora96,Cerf_Theret_2016_fpp} to conclude the proof. Similarly, in~\cite{DrewitzEtAl_2014_chemDist} linear upper bounds for correlated percolation models on the lattice are proven in general, provided the model exhibits the aforementioned weak correlations for monotone events and local uniqueness of the infinite component. The latter property is again closely related to the continuity of the critical value. However, this continuity has only been established in regimes with many long edges~\cite{Moench2024}, and not in our regime, where long edges are rare. In a nutshell, long edges are too rare to use them constructively as in~\cite{Moench2024} but they still prevent us from applying localisation arguments from finite-range models.
	  
	It is worth noting that many examples often dominate either a long-range percolation model, a Gilbert graph or a model in the framework of~\cite{DrewitzEtAl_2014_chemDist}. If these underlying models are supercritical, one immediately observes linear upper bounds. Hence, in these examples chemical distances are linear in the Euclidean distance for sufficiently large intensities \(\lambda\gg\lambda_c\), and we believe this to be true for all \(\lambda>\lambda_c\). 
	\end{remark}      
    
\section{Examples}\label{sec:Examples}
   Let us apply our result to a couple of examples. We shall only briefly sketch these examples and refer the reader to \cite{jacobJahLu2024} for a more detailed discussion on the models.
   
  \paragraph{The weight-dependent random connection model.}   
  	This model was first introduced in~\cite{GHMM2022,GLM2021} as a general framework for inhomogeneous random connection models. It contains many models from the literature as special instances, cf.~\cite[Tab.~1]{GHMM2022}. In this model, \(\scrV\) is either a standard Poisson point process or a (Bernoulli site-percolated) lattice where each vertex carries additionally an independent uniformly distributed vertex mark. We denote a vertex by \(\x=(x,u_x)\in\R^d\times (0,1)\). Here, the mark \(u_x\) models the \emph{inverse weight} of a vertex, thus the smaller \(u_x\) the more attractive for connections the vertex \(\x\) is. Currently, the model is most frequently studied using the following connection rule~\cite{GraLuMo2022}: Given \(\scrV\), each pair of vertices \(\x\) and \(\y\) is connected by an edge independently with probability
  \begin{equation}\label{eq:WDRCM}
  	  \rho \big((u_x\wedge u_y)^\gamma(u_x\vee u_y)^{\gamma'} |x-y|^d\big), \qquad \text{ where }\gamma\in[0,1) \text{ and }\gamma'\in[0,2-\gamma).
  \end{equation}
   Here, \(\rho\) is a decreasing and integrable \emph{profile function}, typically chosen to be either \(\rho(x)\asymp 1\wedge |x|^{-\delta}\) for some \(\delta>1\) or \(\rho=\1_{[0,1]}\) {where the notation \(f\asymp g\) indicates that \(f/g\) is bounded from zero and infinity.} If one identifies the indicator function case with \(\delta=\infty\), one can identify and compare many models from the literature by only three real parameters~\cite[Tab.~1]{GraLuMo2022}.
      
   It is clear that local events in disjoint regions are independent in this setting, and thus \ref{G:Pmix}\(^{-\infty}\) is satisfied. It therefore remains to check for which choices of parameters Property~\ref{G:NoLongE}\(^\mu\) holds in order to apply Theorem~\ref{thm:main}. To this end, the \emph{downwards vertex boundary} exponent \(\zeta\), introduced in~\cite{JorritsmaMitscheKomjathy2023}, can be used. Loosely speaking, \(\zeta\) quantifies the number of vertices within a box \(\Lambda_m\) that are connected to a vertex at distance \(m\), which is weaker (i.e.\ has larger mark) than themselves (thus the term `downward'). If \(\zeta<0\), then there are only a few such vertices, and long edges are rare, {while for \(\zeta>0\) long edges are typical. More precisely, it is shown in~\cite{JorritsmaMitscheKomjathy2023,jacobJahLu2024} that \(\P(\L(m,m))\asymp m^{d\zeta}\) if \(\zeta<0\), and \(\P(\L(m,m))\geq 1-e^{-m^{d\zeta}}\) if \(\zeta>0\). We next show that \ref{G:NoLongE}\(^{d(\zeta-1)}\) is satisfied in the regime where \(\zeta<0\).}
   {
   \begin{lemma}\label{lem:noLongE}
   	Consider the weight-dependent random connection model~\eqref{eq:WDRCM}. If \(\delta>2\), \(\gamma<1-1/\delta\), and \(\gamma'<1-\gamma\), then \ref{G:NoLongE}\(^{d(\zeta-1)}\) is satisfied for
   	\begin{equation}\label{eq:zeta}
   		\zeta = \max\big\{2-\delta, 1-\tfrac{\delta-1}{\gamma\delta}, \tfrac{\gamma'+\gamma-1}{\gamma},\tfrac{2(\gamma'+\gamma-1)}{\gamma'+\gamma}\big\} <0
   	\end{equation}	
   \end{lemma}
   \begin{proof}
   		Let \(m,n>1\) and consider
   		\[
   			\P(\L(m,n)) \leq \sum_{k=1}^\infty \P\big(\exists \x\sim\y\colon \x\in\Lambda_m(o), kn\leq |x-y|<(k+1)n\big)=:\sum_{k=1}^\infty \P(\L_k(m,n)).
   		\] 
   		Consider some fixed \(k\in\N\) and let \(u_{kn}\in(0,1)\) be the largest mark for which
   		\[
   			\E_{(o,u_{kn})}\big[\sharp\{\y\colon |y|\geq kn, u_y>u_{kn}, (o,u_{kn})\sim \y\}\big]\geq 1,
   		\] 
   		where \(\E_{(o,u_{kn})}\) denotes the expectation, given the existence of the vertex \((o,u_{kn})\). Put differently, \(u_{kn}\) is the `weakest' required mark a vertex must have in order to connect, in expectation, to at least one larger-mark vertex at distance \(kn\). By \cite[Sec.~2.1 and Prop.~2.3]{jacobJahLu2024} or~\cite{JorritsmaMitscheKomjathy2023}, respectively, we have, for the considered parameter regime, \(u_{kn}=(kn)^{d(\zeta-1)}\), for \(\zeta\) given in~\eqref{eq:zeta}. Hence,
   		\[
   			\begin{aligned}
   				\P(\L_k(m,n))\leq \P(\exists
   				&
   					\x\in\Lambda_m(o)\colon u_x\leq (kn)^{d(\zeta-1)}) 
   				\\&
   					+ \P\big(\exists \x\sim\y\colon \x\in\Lambda_m(o), kn\leq|x-y|<(k+1)n, u_x,u_y>(kn)^{d(\zeta-1)}\big).
   			\end{aligned}
   		\]
   		In words, for \(\L_k(m,n)\) to occur, there must either exist a vertex with a sufficiently strong mark or there exists an edge between two relatively weak vertices. By a standard expectation bound for the Poisson point process, the first event has probability bounded by \(m^d (kn)^{d(\zeta-1)}\). Applying Markov's inequality, Mecke's equation~\cite{LastPenrose2017}, and the fact that \(\rho\) is a decreasing function, the second probability on the right-hand side is bounded by
   		\[
   			\begin{aligned}
   				\int\limits_{\Lambda_m(o)} 
   				& 
   					\d x \int\limits_{(kn)^{d(\zeta-1)}}^1 \d u_x \int\limits_{\substack{(k+1)n>|x-y|\geq kn}} \d y \int\limits_{(kn)^{d(\zeta-1)}}^1 \d u_y \, \rho \big((u_x\wedge u_y)^\gamma(u_x\vee u_y)^{\gamma'} |x-y|^d\big)
   				\\ &
   					\leq C m^d (kn)^d \int\limits_{(kn)^{d(\zeta-1)}}^1 \d u_x \int\limits_{(kn)^{d(\zeta-1)}}^1 \d u_y  \, \rho \big((u_x\wedge u_y)^\gamma(u_x\vee u_y)^{\gamma'} (kn)^d\big)
   				\\ &
   					= C m^d (kn)^{-d}\Big( (kn)^{2d}\int\limits_{(kn)^{d(\zeta-1)}}^1 \d u_x \int\limits_{(kn)^{d(\zeta-1)}}^1 \d u_y  \, \rho \big((u_x\wedge u_y)^\gamma(u_x\vee u_y)^{\gamma'} (kn)^d\big)\Big),
   			\end{aligned}
   		\]
  		for some constant \(C>1\). However, by~\cite[Sec.~2.1 and Eq.~(5)]{jacobJahLu2024}, the term inside the brackets is of the order \((kn)^{d\zeta}\), and therefore   
  		\[
  			\sum_{k\geq 1}\P(\L_k(m,n))\leq Cm^d n^{d(\zeta-1)}\sum_{k\geq 1} k^{d(\zeta-1)}\leq Cm^d n^{d(\zeta-1)},
  		\]
  		since \(k^{d(\zeta-1)}\) is summable. This concludes the proof.
  	\end{proof}
   	
   	Let us remark that we additionally infer from~\cite{JorritsmaMitscheKomjathy2023,jacobJahLu2024} that \(\zeta\geq 0\) for all parameter choices other than the ones of Lemma~\ref{lem:noLongE} and that~\eqref{G:NoLongE} cannot be satisfied in such a case. Furthermore, Theorem~\ref{thm:main} and Lemma~\ref{lem:noLongE} immediately imply the following result. 
    }
   
   \begin{corollary}\label{cor:WDRCM}
   	Consider the weight-dependent random connection model~\eqref{eq:WDRCM}. If \(\delta>2\), \(\gamma<1-1/\delta\), and \(\gamma'<1-\gamma\), then   
   	\[
   		\limsup_{m\to\infty}\log\big(\P\big(\neg\mathcal{D}^\eta_{L}(m)\big)\big)/\log(m) \leq d\zeta,
   	\]
   	with \(\zeta\) as in~\eqref{eq:zeta}.
   	\end{corollary}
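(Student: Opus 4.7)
The plan is to verify the two hypotheses of Theorem~\ref{thm:main} and then read off the bound. First, since the WDRCM is built either on a homogeneous Poisson point process or on a Bernoulli site-percolated lattice, and since, conditionally on the vertex set and the marks, edges are drawn independently across pairs, any two local events in disjoint boxes $\Lambda_m(o)$ and $\Lambda_m(mx)$ are stochastically independent. Hence the covariance appearing in~\eqref{G:Pmix} is identically zero and Property~\ref{G:Pmix}$^{\xi}$ holds for every $\xi\in\R$. Consequently, in~\eqref{eq:mainTail} we may send $\xi\to-\infty$ and it suffices to establish Property~\ref{G:NoLongE}$^{\mu}$ with $\mu=d(\zeta-1)$; the right-hand side of~\eqref{eq:mainTail} then becomes $d+\mu=d\zeta$, which is exactly the claimed bound.

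Next, I would verify Property~\ref{G:NoLongE}$^{d(\zeta-1)}$. By~\cite[Prop.~23]{jacobJahLu2024}, the parameter conditions $\delta>2$, $\gamma<1-1/\delta$ and $\alpha<1-\gamma$ are precisely those ensuring $\zeta<0$, which in particular yields $d(\zeta-1)<-d$ and places $\mu$ in the admissible range of Theorem~\ref{thm:main}. Following the strategy sketched in the paragraph preceding the corollary, and carried out in~\cite{JorritsmaMitscheKomjathy2023,jacobJahLu2024}, one isolates an optimal inverse mark $u_n$ describing the mark a vertex should carry in order to most efficiently possess an edge of length at least $n$. Conditional on $\scrV$, the Mecke/Campbell formula together with the explicit form of the connection probability in~\eqref{eq:WDRCM} yields, for a vertex $\x=(x,u_x)$ in $\Lambda_m$, an explicit bound on the probability of having an incident edge of length $>n$ in terms of the integral of $\rho(\,\cdot\,)$ against the distribution of the partner's mark and location. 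Integrating over $u_x$ and summing the contributions of the dominant scale $u_n$ and of the non-optimal mark strata gives a bound proportional to $m^d$ times the per-vertex probability, namely of order $m^d n^{d(\zeta-1)}$, consistently with the known asymptotic $\P(\L(m,m))\asymp m^{d\zeta}$ recorded above. A union bound over the at most $\mathcal{O}(m^d)$ vertices in $\Lambda_m$ then produces the required estimate $\P(\L(m,n))\leq C_{\L}\,m^d n^{d(\zeta-1)}$ uniformly in $m\ge 1$ and large~$n$.

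With both properties verified, Theorem~\ref{thm:main} applies directly with $\xi=-\infty$ and $\mu=d(\zeta-1)$, giving
\[
    \limsup_{m\to\infty}\frac{\log\P\big(\neg\mathcal{D}^\eta_{L,M}(m)\big)}{\log m}\leq d+\mu=d\zeta,
\]
which is the assertion of the corollary. The main technical obstacle is the uniformity in $m$ and $n$ of the `no long edges' bound: the optimal-mark heuristic in~\cite{jacobJahLu2024} is developed primarily at the diagonal scale $n=m$, and one has to check that the contributions of the off-optimal mark scales and of edges joining two heavy (small-mark) vertices remain negligible under the restrictions $\gamma<1-1/\delta$ and $\alpha<1-\gamma$. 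This is essentially a careful bookkeeping of the integrals defining $\zeta$ and does not require new ideas beyond those already present in~\cite{jacobJahLu2024}.
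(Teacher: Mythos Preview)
Your proposal is correct and follows exactly the route the paper takes in the paragraph preceding the corollary: independence of local events in disjoint boxes gives~\ref{G:Pmix}\(^\xi\) for every \(\xi\) (the paper writes \(\ref{G:Pmix}^{-\infty}\)), the parameter assumptions are identified with the \(\zeta<0\) regime via~\cite[Prop.~23]{jacobJahLu2024}, and the optimal-mark argument from~\cite{JorritsmaMitscheKomjathy2023,jacobJahLu2024} upgrades \(\P(\L(m,m))\asymp m^{d\zeta}\) to Property~\ref{G:NoLongE}\(^{d(\zeta-1)}\), so that Theorem~\ref{thm:main} yields the bound \(d+\mu=d\zeta\). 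The only cosmetic point is that ``sending \(\xi\to-\infty\)'' should be read as fixing any \(\xi<d(\zeta-1)\), since the constants \(\eta,M\) in Theorem~\ref{thm:main} depend on \(\xi\); this does not affect the conclusion.
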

   
   Note that the model~\eqref{eq:WDRCM} always dominates a Gilbert graph and a linear upper bound on the chemical distances follows by domination for all large enough \(\lambda\), cf.\ Remark~\ref{rem:upper} and~\cite{YaoChenGuo2011}. Let us consider two models from the class of weight-dependent random connection models more closely.
   
 \emph{The Boolean model} corresponds to the choice of \(\gamma>0\), \(\gamma'=0\), and \(\delta=\infty\)~\cite{Hall85}. That is, each vertex is assigned an independent Pareto distributed radius with tail exponent \(d/\gamma\) and each vertex is the centre of a ball of its associated radius. Any two vertices are connected by an edge if the centre of the smaller ball is contained in the larger ball. It was shown in~\cite{Hirsch2017} that the chemical distances are lower bounded by \(|x|/\log^p|x|\) for any power \(p>0\). However, it was conjectured that the logarithmic correction was a result of the method and that graph distances are indeed linearly lower bounded, cf.~\cite[Conj.~5]{Hirsch2017}. Corollary~\ref{cor:WDRCM} proves the conjecture. 
 
 \emph{The soft Boolean model} corresponds to the choice of \(\gamma>0\), \(\gamma'=0\), and \(\delta\in(1,\infty)\)~\cite{GGM22}, which is a model that interpolates between the Boolean model and long-range percolation~\cite{jahnel_Lu_Ort_2024_cluster}. It was shown in~\cite{GGM22} that chemical distances are doubly logarithmic in the Euclidean distance for all \(\delta>1\) and \(\gamma>1-1/(\delta+1)\). Further,~\cite{GGM22} elaborates that distances are at least logarithmic in the Euclidean distance if \(\gamma<1-1/(\delta+1)\). We find with Corollary~\ref{cor:WDRCM} that graph distances are at least linear if \(\delta>2\) and \(\gamma<1-1/\delta\), specifying the result of~\cite{GGM22}. Recent results for scale-free percolation (\(\gamma=\gamma'>0\)) suggest that for \(\delta<2\) and \(\gamma<1/2\), graph distances are a power of the logarithm of the Euclidean distance, where the exponent is the one of long-range percolation~\cite{LakisEtal_2024_chemical}. It remains an interesting open problem to identify the power of the logarithm for the regime \(\delta>2\) and \(1/2<\gamma<1/(\delta+1)\). We leave this for future work. 
 
 \paragraph{Soft Boolean model with local interference.}
 This model is an example for a \emph{generalised weight-dependent random connection model}, in which edges not only depend on their end-vertices but also on surrounding vertex clouds~\cite{jacobJahLu2024}. The vertex set is the same as for the soft Boolean model and a vertex \(\x=(x,u_x)\) still has assigned a sphere of influence of radius \(u_x^{-d/\gamma}\) that is used together with additional long-range effects to form connections. Additionally, the vertex has assigned a sphere of interference of radius \(u_x^{-d/\beta}\) for some \(\beta<1\). The vertices located in the latter sphere interfere and make it harder for the vertex to form connections. More precisely, two vertices \(\x=(x,u_x)\) and \(\y=(y,u_y)\) with \(u_x<u_y\) are connected by an edge with probability
 \[
 	\mathbf{p}(\x,\y,\scrV) = \frac{1\wedge (u_x^{\gamma}|x-y|^d)^{-\delta}}{\sharp\{\z\in\scrV: |x-z|^d<u_x^{-\beta}\}}.
 \]
 It is elaborated in~\cite{jacobJahLu2024} that the exponent \(\zeta\) can straight-forwardly be generalised to such a setting. It is further shown that the model mixes polynomially with exponent \(\xi=1-1/\beta\) and that it has \(\zeta<0\) if \(\gamma<(\delta+\beta-1)/\delta\). Hence, Theorem~\ref{thm:main} applies in this case and proves linear lower bounds on the chemical distance, with probabilistic rate no slower than polynomially with exponent \(\xi\vee \zeta\). 
 
 \paragraph{Ellipses percolation.}  
 Introduced in~\cite{Teixeira_Ungaretti_2017}, this model can be seen as a generalisation of the planar Boolean model. Instead of a ball, each vertex is the centre of an ellipsis with \(\operatorname{Pareto}(2/\gamma)\) distributed major axis, minor axis equal to one, and uniformly distributed orientation. Intriguingly, replacing balls by ellipses introduces additional correlations with interesting effects. The paper~\cite{Hilario-Ungaretti-2021} elaborates graph distances in the regime \(\gamma\in(1,2)\) and shows that these scale doubly logarithmically in the Euclidean distance. In the \(\gamma<1\) regime however Property~\ref{G:NoLongE}\(^{-2/\gamma}\) is satisfied, cf.~\cite{jacobJahLu2024}, and Theorem~\ref{thm:main} shows linear lower bounds on the graph distances. For a more detailed discussion on the correlations involved, their interesting effects, and generalisations of the model, we refer to~\cite{GKM2024_robustness, gracar2025_chemical}. {Specifically,~\cite{gracar2025_chemical} generalises the results of~\cite{Hilario-Ungaretti-2021} on the doubly-logarithmic regime to general convex grains and higher dimensions.}

 \paragraph{More general underlying vertex locations.}
 Theorem~\ref{thm:main} also applies to models that are constructed on more correlated point clouds than a Poisson point process or a Bernoulli site-percolated lattice. For instance one could consider models constructed on a Cox process, a Gibbs process, or a lattice based on percolation of worms~\cite{rath_rokob_2022_worm}, which is done in greater detail in~\cite{jacobJahLu2024}.

\section{Proof of Theorem~\ref{thm:main}} \label{sec:proofMain}
In this section, we employ the renormalisation scheme that goes back to~\cite{berger2004} that we use to proof Theorem~\ref{thm:main}. {All auxiliary results stated in the course of this proof are proven separately in Section~\ref{sec:proofAux}.} We start by defining the scales. Fix a large even \(K\in 2\N\) to be specified later and set \(K_0:=K\) as well as \(K_n:=K(n!)^2\). A stage-\(n\) box is a box \(B_n(x):= \Lambda_{K_n}(x)=x+[-K_n/2,K_n/2)^d\) of side length \(K_n\), centred at \(x\in\R^d\). We abbreviate \(B_n=B_n(o)\). Let us define the notion of a \emph{good} box; we refer to a box that is not good as being \emph{bad}, see Figure~\ref{fig:badBoxes}.
  	
  	\begin{definition}[Good boxes]\label{def:GoodBox}
        Let \(x\in\R^d\) and consider the boxes \(B_n(x)\). We say that
        \begin{enumerate}[(i)]
            \item the stage-\(0\) box \(B_0(x)\) is \emph{good}, if it contains no internal edge longer than \(K_0/100\).
            \item We say that the stage-\(n\) box \(B_n(x)\) is \emph{good} if all \(3^d\) boxes \(B_n^\mathbf{j}(x):=B_n(x+ \tfrac{K_{n-1}}{2}\mathbf{j})\), for \(\mathbf{j}\in\{-1,0,1\}^d\), satisfy the following two conditions:
            \begin{enumerate}[(a)]
                \item No edge internal to \(B_n^\mathbf{j}(x)\) is longer than \(K_{n-1}/100\).
                \item Of all the \(n^{2d}\) stage-\((n-1)\) boxes contained in \(B_n^\mathbf{j}(x)\), no more than \(3^d\) are bad themselves.
            \end{enumerate}
        \end{enumerate}
    \end{definition}
    
    \begin{figure}[t!]
	\begin{center}
	\begin{subfigure}[t]{0.45\textwidth}
			\resizebox{\textwidth}{!}{
				\begin{tikzpicture}[scale=1, every node/.style={scale=0.6}]
					\def\N{12}          
					\def\a{0.4}        
					\def\shift{0.5*\a} 

					\coordinate (A) at (0,0);             
					\coordinate (B) at (\shift,0);        

					\newcommand{\FillCells}[1]{%
  						\foreach \p in {#1} {%
    						\pgfmathparse{\p}%
  						}%
  						\foreach \i/\j in {#1} {%
    						\path[fill=gray!50] ($(B)+(\i*\a,\j*\a)$) rectangle ++(\a,\a);
  						}%
					}
 					\FillCells{7/11}
					\draw[line width=0.8pt] (A) rectangle ++(\N*\a,\N*\a);

					\foreach \k in {1,...,\numexpr\N-1\relax} {
 				 		\draw ($(A)+(\k*\a,0)$) -- ++(0,\N*\a);
  						\draw ($(A)+(0,\k*\a)$) -- ++(\N*\a,0);
					}

					\draw[line width=0.8pt, dotted] (B) rectangle ++(\N*\a,\N*\a);

					\foreach \k in {1,...,\numexpr\N-1\relax} {
  						\draw[dotted] ($(B)+(\k*\a,0)$) -- ++(0,\N*\a);
  						\draw[dotted] ($(B)+(0,\k*\a)$) -- ++(\N*\a,0);
					}
					\draw (0.9 , 2.13) node[circle, fill = black, scale=0.4] {};
					\draw (1.74 , 1.91) node[circle, fill = black, scale=0.4] {};
					\draw[bend angle = 45, bend left, thick] (0.9 , 2.13) to (1.74 , 1.91);
					
					\draw (3.1 , 4.52) node[circle, fill = black, scale=0.2] {};
					\draw (3.27 , 4.6) node[circle, fill = black, scale=0.2] {};
					\draw[bend angle = 45, bend right, thick] (3.1 , 4.52) to (3.27 , 4.6);
				\end{tikzpicture}
			}
			\caption{A box that fails Property~(ii)~(a).}
			\label{subfig:badEdge}
		\end{subfigure}
		\hfill 
		\begin{subfigure}[t]{0.45\textwidth}
			\resizebox{\textwidth}{!}{
				\begin{tikzpicture}[scale=1]
					\def\N{12}          
					\def\a{0.4}        
					\def\shift{0.5*\a} 

					\coordinate (A) at (0,0);             
					\coordinate (B) at (\shift,0);        

					\newcommand{\FillCells}[1]{%
  						\foreach \p in {#1} {%
    						\pgfmathparse{\p}%
  						}%
  						\foreach \i/\j in {#1} {%
    						\path[fill=gray!50] ($(B)+(\i*\a,\j*\a)$) rectangle ++(\a,\a);
  						}%
					}
 					\FillCells{3/4,3/5,3/6,3/7,4/4,4/5,4/6,5/4,5/5,5/6}

					\draw[line width=0.8pt] (A) rectangle ++(\N*\a,\N*\a);

					\foreach \k in {1,...,\numexpr\N-1\relax} {
 				 		\draw ($(A)+(\k*\a,0)$) -- ++(0,\N*\a);
  						\draw ($(A)+(0,\k*\a)$) -- ++(\N*\a,0);
					}

					\draw[line width=0.8pt, dotted] (B) rectangle ++(\N*\a,\N*\a);

					\foreach \k in {1,...,\numexpr\N-1\relax} {
  						\draw[dotted] ($(B)+(\k*\a,0)$) -- ++(0,\N*\a);
  						\draw[dotted] ($(B)+(0,\k*\a)$) -- ++(\N*\a,0);
					}
				\end{tikzpicture}
			}
			\caption{A box that fails Property~(ii)~(b).}
			\label{subfig:badSubbox}
		\end{subfigure}
	\end{center}
	\caption{The bad box \(B_n\), its sub boxes, and, dotted, the right-shifted box \(B_n^{(1, 0)}\) with its sub boxes. In grey, the bad sub boxes of \(B_n^{(1, 0)}\). On the left, two long edges are shown. The longer edge is too long so that \(B_n\) fails Property~(ii)~(a). The second longest edge is short enough on scale-\(n\) but, on scale-\((n-1)\), it creates a bad sub box of the right-shifted box. On the right, \(B_n\) fails Property~(ii)~(b) as the right-shifted box contains \(3^d+1\) bad sub boxes.}
	\label{fig:badBoxes}
	\end{figure}
    
    Our first result is to determine the probability that a box centred at the origin is bad. Define
    \[
    	\psi_K(n):=\P(B_n \text{ is bad}).
    \]  

    \begin{lemma}\label{lem:badSummable} 
    	If \(\G\) has the Properties~\ref{G:Pmix}\(^\xi\) and~\ref{G:NoLongE}\(^\mu\), for some \(\xi<0\) and \(\mu<-d\), then, {for all \(c>4(d+|\xi\wedge(d+\mu)|)\)}, there exists \(\overline{K}\in\N\) such that, {for all \(K \geq \overline{K}\)},
    	\[
    		\psi_K(n)\leq \big((n+1)!\big)^{-2|\xi\vee(d+\mu)|+c/n}.
    	\]
    \end{lemma}    
    
   
    \begin{corollary}\label{corol:badBoxes} 
    	Under the assumptions of Lemma~\ref{lem:badSummable}, there exist constants \(C,\overline{N}\in\N\), such that, for all \(n\geq \overline{N}\) and \(K>\overline{K}\), we have
    	\[
    		\sum_{k\geq n-1}\psi_K(k)\leq C (n!)^{-2|\xi\vee(d+\mu)|+c/n}
    	\] 
    \end{corollary}
    To make use of good boxes, we formulate the following proposition, which is a stronger version of~\cite[Lemma 2]{berger2004} in the spirit of~\cite[Proposition 2.6]{komjathy2023FPP} for cost distances in one-dependent first-passage percolation, and allows us to circumvent the use of the subadditive ergodic theorem. Note that the following result is a deterministic statement on a fixed configuration fulfilling certain assumptions and does not involve any probabilities. 
    
    \begin{prop}\label{prop:Renorm}
    	Fix \(N\geq (2d+1)9^d\). Consider \(\x,\y\in\scrV\) such that \(y\in B_n(x)\) and \(|x-y|>K_{n-1}/8\) for some \(n\geq N\). {On the event that both boxes \(B_n(x)\) and \(B_{n-1}(x)\) are good, there exists a universal constant \(C_1=C_1(d,N)\),} such that each path connecting \(\x\) and \(\y\) that is completely contained in \(B_n(x)\) consists of at least \(C_1|x-y|\) many edges. 
    \end{prop}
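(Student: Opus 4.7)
I would argue by induction on the scale $n \geq N$. For the base case $n = N$ one can use Definition~\ref{def:GoodBox}(ii)(a) directly: every internal edge of a good box $B_N(x)$ has length at most $K_{N-1}/100$, so any path of $L$ edges inside it spans Euclidean distance at most $LK_{N-1}/100$. Hence $L \geq (100/K_{N-1})|x-y|$, which gives the claim with $C_1 := 100/K_{N-1}$; since $N$ is fixed this is a positive constant. The quantitative crux of the inductive step will then be to ensure $C_1$ does not shrink to zero across scales.

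\textbf{Inductive step.} Let $\pi = (\x_0, \dots, \x_L)$ be a path in $B_n(x)$ with $\x_0 = \x$ and $\x_L = \y$. The goodness of $B_n(x)$ forces every edge to have length at most $K_{n-1}/100$, so consecutive vertices of $\pi$ lie in the same or in adjacent stage-$(n-1)$ sub-boxes of $B_n(x)$. Partition $B_n(x)$ into its $n^{2d}$ stage-$(n-1)$ sub-boxes and decompose $\pi$ into maximal \emph{visits} to single sub-boxes; let $a_i,b_i$ denote the first and last vertex of the $i$-th visit (so $a_1 = \x$, $b_k = \y$). The triangle inequality gives
\[
    |x-y| \le \sum_{i=1}^{k} |a_i - b_i| + \sum_{i=1}^{k-1} |b_i - a_{i+1}|,
\]
where the inter-visit edges contribute at most $(k-1)K_{n-1}/100$. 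I then classify each visit as \emph{long} if $|a_i - b_i| > K_{n-2}/8$ and as \emph{short} otherwise, and further split the long visits according to whether their host sub-box is good at stage $n-1$. For every long-good visit I apply the inductive hypothesis to the sub-path restricted to that sub-box, yielding a lower bound of $C_1 |a_i - b_i|$ edges per visit; this requires choosing a reference sub-box that is itself good and nested inside a good stage-$(n-2)$ box, for which the flexibility of the $3^d$ translates $B_{n-1}^{\mathbf{j}}$ in Definition~\ref{def:GoodBox}(ii) is exploited. The long-bad visits are at most $3^d$ in number by Definition~\ref{def:GoodBox}(ii)(b) and contribute a total span of $O(K_{n-1})$, while the short visits are controlled via a combinatorial bound on the number $k$ of sub-box transitions. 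Summing yields $L \geq C_1 |x-y|$ up to a multiplicative slack of the form $1 - \varepsilon_n$, where $\varepsilon_n$ is summable in $n$, so iterating preserves a strictly positive constant.

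\textbf{Main obstacle.} The delicate bookkeeping point is that the inductive hypothesis as stated places $\x$ at the centre of a nested good $(B_{n-1},B_{n-2})$-pair, a condition that need not hold at the entry vertex $a_i$ of each long visit; overcoming this requires either strengthening the inductive statement to allow arbitrary start vertices inside a good box or a careful shifting argument using the $3^d$ translates of Definition~\ref{def:GoodBox} to realign the reference box with an available good nested pair. The threshold $N > (2d+1)9^d$ is calibrated precisely so that the cumulative slack from bad sub-boxes ($\leq 9^d = 3^{2d}$ across the $3^d$ translates), short-visit slack, and inter-visit edges remains a strictly bounded fraction of $|x-y|$ at every scale $n \geq N$, so that the uniform constant $C_1 > 0$ survives iteration.
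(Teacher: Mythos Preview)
Your overall strategy---induction on the scale with a multiplicative slack $(1-\varepsilon_n)$ whose infinite product stays positive---matches the paper's, and your base case is correct. However, two genuine gaps remain in the inductive step.

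First, the assertion that ``long-bad visits are at most $3^d$ in number'' is false under your decomposition. A maximal visit to a single sub-box ends the moment the path steps into a neighbouring sub-box; the path may leave and re-enter the same bad sub-box arbitrarily often, producing arbitrarily many bad visits even though only $3^d$ sub-boxes are bad. The paper avoids this by defining bad segments differently: the $t$-th bad segment runs from the step before the first entry into a particular bad box $Q_{j_t}$ all the way to the step after the \emph{last} exit from $Q_{j_t}$, absorbing every intermediate excursion. This guarantees at most $9^d$ bad segments in total (at most one per bad box over all $3^d$ translates), each with endpoint-distance at most $2dK_{n-1}$. Your ``combinatorial bound on the number $k$ of sub-box transitions'' cannot repair this, since $k$ may be of order $L$ and the short/bad contribution $kK_{n-2}$ then swamps $|x-y|$.

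Second, the obstacle you correctly identify---that the inductive hypothesis as stated demands a nested good pair $(B_{n-1},B_{n-2})$ \emph{centred} at the entry vertex---is not resolved by the $3^d$ translates alone, and you do not actually resolve it. The paper does so by proving a strictly stronger auxiliary statement by induction: for \emph{any} good box $B'_n$ (no centring, no nested-pair requirement) and any two vertices in it at distance exceeding $K_{n-1}/16$, every internal path has at least $C'\Lambda(n)|x-y|$ edges, with $\Lambda(n)=\prod_{h=N}^n(1-N/h^2)$. In the inductive step, each good segment $\pi_s$ with $\mathsf{dist}(\pi_s)>K_{n-1}/2$ is further subdivided via a greedy algorithm into pieces $\pi_s[\w_i,\w_{i+1}]$, each contained in a ball of radius $K_{n-1}/2$ and spanning distance $>K_{n-1}/16$; since such a ball avoids all bad sub-boxes, one of the $3^d$ translated good stage-$(n-1)$ boxes contains the piece, and the strengthened hypothesis applies directly. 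Only after this induction is complete does the paper deduce the proposition itself, using the extra assumption that $B_{n-1}(x)$ is good to handle the borderline range $K_{n-1}/8<|x-y|<2NK_{n-1}$.
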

    
    {We have now set the stage for proving Theorem~\ref{thm:main}.}
    
    \begin{proof}[Proof of Theorem~\ref{thm:main}] {In the following, we abbreviate \(\Lambda_r:=\Lambda_r(o)\) for any box centred at the origin.} Fix \(L\in\N\) {and some even \(K>\overline{K}\) so that Lemma~\ref{lem:badSummable} and Corollary~\ref{corol:badBoxes} apply}, and choose some \(M\geq 2K\). For \(m\in\N\), let \(n=n_K(m)\) be the largest integer satisfying {\(K_n=K(n!)^2\leq m\)}. As \(m\) (and therefore \(n\)) can be chosen arbitrarily large, we assume \(\Lambda_L\subset B_{n-3}\). Consider now a realisation \(\omega\), in which all the boxes \(B_{n-1}, B_n,\dots\) are good. Let \(\x\in\Lambda_L\) and \(\y\in B_{n}^\mathsf{c}\) be two vertices in this realisation. Let \(n_y\) be the first index such that \(\y\in B_{n_y}\setminus B_{n_y-1}\). Particularly, this implies \(n_y\geq n\) as well as \(\sqrt{d}(K_{n-1}-L)/2<|x-y|<\sqrt{d}(K_{n_y}+L)/2\). Let further \(\pi\) be a path from \(\x\) to \(\y\) and let \(n_\pi\geq n_y\) be the first index such that \(\pi\subset B_{n_\pi}\). By definition of \(n_\pi\), there exists a vertex \(\boldsymbol{v}\in\pi\cap B_{n_{\pi}}\setminus B_{n_\pi -1}\). If \(n_\pi=n_y\), we pick \(\boldsymbol{v}=\y\). Otherwise, \(n_\pi>n_y\) and we pick the first \(\boldsymbol{v}\) in \(\pi\) with this property, and consequently,
	\[
		|v-x|\geq (K_{n_\pi-1}-L)/2 \geq (K_{n_y}-L)/2 \geq |y-x|/\sqrt{d}.
	\]      
	In both cases, Proposition~\ref{prop:Renorm} applies and we find \(\sharp\{\text{edges of }\pi\}\geq \eta|x-y|\), where 
	\(\eta=C_1/\sqrt{d}\). Note that, although we may not have centred the boxes in question at \(x\), the box \(\Lambda_L\) is only a small box of constant size close to the origin. As \(\x\in\Lambda_L\) and \(n\) is large, the proof of Proposition~\ref{prop:Renorm}, given in Section~\ref{sec:proofAux} below, still applies. As a result, \(\mathcal{D}_{L}^\eta(m)\) always occurs, if the boxes \(B_{n-1},B_n,\dots\) are all good. Consequently, \(\mathcal{D}_L^\eta(m)\) can only fail to occur if at least one of these boxes is bad. Hence, {by Corollary~\ref{corol:badBoxes}, we infer for some \(c>4d+4|\xi\vee(d+\mu)|\) that}
	\begin{equation}\label{eq:rateFak}
		\begin{aligned}
			\P\big(\bigcup_{k\geq n-1}\{B_k \text{ is bad}\}\big) \leq C (n!)^{-2|\xi\vee (d+\mu)|+c/n},
		\end{aligned}
	\end{equation}
	for large enough \(n\) and some constant \(C>0\). By definition, we have {\(m<K_{n+1}\) and therefore \(m/(n+1)^2\leq K_n\). Furthermore, applying Stirling's formula to large \(n\), we have \((n+1)^2 \leq \log m\), therefore \(m/\log(m)\leq (n!)^2\), and \(\log^\epsilon  m\leq n\), for any \(\epsilon\in(0,1)\).} Plugging this back into~\eqref{eq:rateFak}, we obtain
	\[
		\P(\neg \mathcal{D}_{L}^\eta(m))\leq \P\big(\bigcup_{k\geq n-1}\{B_k \text{ is bad}\}\big) \leq C \Big(\frac{m}{\log m}\Big)^{-|\xi\vee (d+\mu)|+c/(2\log^\epsilon m)}.
	\]
	Taking the logarithms, dividing both sides by \(\log(m)\), and sending \(m\to\infty\) concludes the proof of Theorem~\ref{thm:main}.
	\end{proof}
	 	
\section{Proof of auxiliary results} \label{sec:proofAux}
In this section, we give the proofs of the results concerning Berger's renormalisation scheme used in the previous section. We begin with deriving the probability that a box is bad thus proving Lemma~\ref{lem:badSummable}. 
 
\begin{proof}[Proof of Lemma~\ref{lem:badSummable}.] 
        Let us define, for each \(n\in\N\), the event \(\L_n:=\L(K_n,K_{n-1}/100)\), where \(\L(m,n)\) is the defining event of Property~\ref{G:NoLongE}\(^\mu\), {define \(\alpha:=|\xi\vee (d+\mu)|\) and fix some \(c>4d+4\alpha\).} We immediately infer
        \begin{equation}\label{eq:PL}
        	\P(\L_n)\leq 100^{|\mu|} C_\L K_n^{d}K_{n-1}^{-|\mu|} \leq 100^{|\mu|} C_\L n^{2d}K_{n-1}^{{}-\alpha}.
        \end{equation}
        Observe that this implies, for each fixed \(n\), that \(\P(\L_n)\downarrow 0\), as \(K\to\infty\). Let us further derive, for \(n\geq 2\), a recursive formula for the probability of the stage-\(n\) box being bad. Such a box is bad, if one of the \(3^d\) translated boxes \(B_n^\mathbf{j}\) fails either Property~(a) or~(b). A union bound over the \(3^d\) many boxes \(B_n^\mathbf{j}\) combined with translation invariance \ref{G:Translation}, thus yields
         \[
            \psi_K(n) \leq 3^d\big(\P(\L_n) + \P(\text{at least }3^d+1 \text{ stage-}(n-1) \text{ boxes contained in } B_n \text{ are bad})\big).
         \]
        By construction, if there are at least \(3^d+1\) bad stage-\((n-1)\) boxes contained in \(B_n\), then at least two of these are at distance \(2K_{n-1}\), see Figure~\ref{subfig:badSubbox}. Since there are no more than \(\binom{n^{2d}}{2}\leq n^{4d}\) many possibilities to choose two such boxes, we infer with a union bound, the mixing property~\ref{G:Pmix}\(^\xi\), and translation invariance~\ref{G:Translation},
        \begin{equation} \label{eq:recursion}
        	\begin{aligned}
        	   \psi_K(n) & \leq 3^d \P(\L_n) + 3^d n^{4d}\big(\psi_K(n-1)^2+C_\text{mix}K_{n-1}^{-|\xi|}\big).
        	\end{aligned}
        \end{equation}
        We now prove the claim by induction. 
        Set \(C_2=3^d(100^{|\mu|}C_\L +1+ C_\text{mix})\), recall \(\alpha=|\xi\vee(d+\mu)|\), and let \(N\) be the smallest integer such that {\(|2\alpha|>c/N\)} and
        \[
        	 C_2 (n+1)^{-4\alpha+4d}(n-1)!^{-c/n}\leq 1, \qquad \text{ for all } n\geq N,
        \]
        noting that this \(N\) exists {as \(c>4d+4\alpha\)}. For \(n=1\), we now have
        \[
        	\psi_K(1)\leq 3^d\P(\L_1)\leq (2!)^{-2\alpha+c},
        \]
        for large enough \(K\). Similarly, for \(n=2\), we find using~\eqref{eq:recursion} and \(\psi_K(1)\leq 3^d\P(\L_1)\),
        \begin{equation*}
        	\begin{aligned}
        		\psi_K(2) 
        		& 
        			\leq 3^d \P(\L_2) + 3^d\cdot  2^{4d}\big(\psi_K(1)^2+ C_\text{mix} K_1^{-|\xi|}\big)
        			\leq 3^d \P(\L_2)+3^{2d}\cdot 2^{4d} \, \P(\L_1)^2+C_\text{mix}K_1^{-|\xi|}
        		\\ &
        			\leq (3!)^{-2\alpha+c/2},
        	\end{aligned}
        \end{equation*}   
        perhaps increasing \(K\) if necessary. We proceed similarly \(N\) times, adapting \(K\) at each step if needed and infer \(\psi_K(n)\leq (n+1)!^{-2\alpha+c/n}\) for all \(n=1,\dots,N\). Note that \(K\) is finite still and remains unchanged from this point onwards. Now assume that the claim has been proven until some \(n-1\geq N\). Then, applying~\eqref{eq:PL} and the induction hypothesis to~\eqref{eq:recursion}, we infer 
        \begin{equation*}
        	\begin{aligned}
            	\psi(n) 
            	& 
            		\leq 3^d \, 100^{|\mu|} C_\L n^{2d}K^{-\alpha}(n-1)!^{{}-2\alpha}+3^d n^{4d}\big({(n!^{-2\alpha+c/n})^2}+C_{\text{mix}}K^{-|\xi|}(n-1)!^{-2|\xi|}\big) 
            	\\ &
            		{\leq (n+1)!^{-2\alpha+c/n}\big(C_2 (n+1)^{4d+4\alpha}(n-1)!^{-c/n}\big)\leq (n+1)!^{-2\alpha+c/n}}
            \end{aligned}
        \end{equation*}
        since \(n>N\). This concludes the proof. 
    \end{proof}
    
    \begin{proof}[Proof of Corollary~\ref{corol:badBoxes}.]
    	The result of Corollary~\ref{corol:badBoxes} is a direct consequence of Lemma~\ref{lem:badSummable} and Stirling's formula. More precisely, again with \(\alpha=|\xi\vee (d+\mu)|\),
    	\begin{equation*}
    		\begin{aligned}
				\sum_{k\geq n-1} \psi_K(k)
				& 
					\leq \sum_{k=n}^\infty {(k!)^{-2\alpha+c/k}}
					\leq C	\sum_{k=n}^\infty \big(\sqrt{k} \, e^{k (\log k-1)}\big)^{-2\alpha+c/n} 
				\\ &
					\leq C  (\sqrt{n})^{-2\alpha+c/n} \sum_{k=n}^\infty e^{(-2\alpha+c/n)k(\log n - 1)} 
				\\ &
					\leq C \frac{(\sqrt{n})^{-2\alpha+c/n}}{\log n}e^{(-2\alpha+c/n) n(\log n -1)} 
					\leq C (n!)^{-2\alpha+c/n}, 	
			\end{aligned}
		\end{equation*}
 		for large enough \(n\), where \(C>1\) is a constant that may have changed from line to line. 		
    \end{proof}
   
    We finally turn to the proof of Proposition~\ref{prop:Renorm}. To this end, we first prove an adaptation of~\cite[Lemma~2]{berger2004}. {To appropriately formulate it, we define the number of edges of a path \(\pi\) as }
    \[
    	\ell(\pi):=\sharp\{\text{edges of }\pi\}.
    \]
    
    \begin{lemma}\label{lem:renormPraep}
    	Let \(n\geq N\geq (2d+1)9^d\), as in Proposition~\ref{prop:Renorm}. There exists a universal constant \(C'=C'(d,N)>0\) such that, if \(B_n'\) is some good box with \(\x,\y\in B_n'\) and \(|x-y|>K_{n}/16\), then every path \(\pi\) connecting \(\x,\y\) inside \(B'_n\) has
    	\begin{equation} \label{eq:renormalisationInd}
    		\ell(\pi) \geq C' \Pi(n) |x-y|, \text{ where } \Pi(n):=\prod_{h=N}^n \big(1-\tfrac{N}{h^2}\big).
    	\end{equation}
    \end{lemma}
    \begin{proof}
    	We prove the lemma by induction and we start with the \emph{base case} \(N=n\). Since \(B'_N\) is good, there is no edge contained in it longer than \(K_{N-1}/100\). Furthermore, as \(|x-y|>K_{N}/16\), we infer 
    	\[
    		\ell(\pi)\geq \frac{100|x-y|}{K_{N-1}} \geq \frac{25}{4} N^2. 
    	\]
    	Additionally, \(|x-y|\leq \sqrt{d}K_N\). Thus, the claim follows for \(C'=1/(\sqrt{d}K_{N-1})\) as this implies \((25/4) N^2\geq C'(1-1/N)|x-y|\).  
    	
    	For the \emph{induction step}, assume that \eqref{eq:renormalisationInd} holds for all stages up to stage \(n-1\geq N\). Let \(\x,\y\in B'_n\) with \(|x-y|>K_n/16\) and \(B'_n\) be good. Let \(\pi=(\boldsymbol{v}_0,\dots,\boldsymbol{v}_\ell)\) be a path connecting \(\x=\boldsymbol{v}_0\) and \(\y=\boldsymbol{v}_\ell\) that is completely contained in \(B'_n\). Since \(B'_n\) is good, \(|\boldsymbol{v}_i-\boldsymbol{v}_{i-1}|\leq K_{n-1}/100\). Moreover, at most \(3^d\) of the stage-\((n-1)\) sub boxes in each translation \((B_n')^\mathbf{j}\) are bad. Hence, there are no more than \(9^d\) bad sub boxes in the union of all translations. We denote the bad sub boxes by \(Q_1,\dots, Q_j\) (for \(j\leq 9^d\)) and set \(Q:= Q_1\cup\dots\cup Q_j\). We decompose \(\pi\) into alternating \emph{good segments} \(\pi_s\) and \emph{bad segments} \(\sigma_t\), some of which may be empty, such that \(\pi=(\pi_1,\sigma_1,\pi_2,\dots,\sigma_T,\pi_S)\) for some \(S,T\), where the last vertex in each non empty segment is also the first vertex in the non empty subsequent one. As, by construction, \(S-1\leq T\leq 9^d\), we may simplify \(S=T=9^d\) in the following as this may only introduce additional empty segments at the end of the decomposition. With the same reasoning, we may always start the decomposition with a good segment because, if the first segment is bad, then simply \(\pi_1=\emptyset\). We use the following procedure to obtain the desired decomposition. First, if \(\pi\cap Q=\emptyset\), we simply choose \(\pi_1=\pi\) and all the other segments to be empty. Otherwise, define \(i_1=\min\{i\leq \ell:v_i\in Q\}\) the index of the first vertex contained in the bad region and \(j_1\) the index of the bad box it is contained in, i.e.\ \(v_{i_1}\in Q_{j_1}\), where we choose the smallest such index if two or more bad boxes overlap. However, our result does not depend on the precise ordering of the bad regions and the result holds verbatim for any other decision rule. We set \(\pi_1=(\boldsymbol{v}_0,\dots,\boldsymbol{v}_{i_1-1})\) (or \(\pi_1=\emptyset\) if \(i_1=0\)). Define further \(k_1=\max\{i: v_i\in Q_{j_1}\}\) and set \(\sigma_1=(\boldsymbol{v}_{i_1-1},\dots,\boldsymbol{v}_{k_1+1})\). Let us remark that this segment may also leave and reenter the bad box \(Q_{j_1}\) multiple times but the path never returns to \(Q_{j_1}\) after the vertex with index \(k_1\). Inductively, define \(i_s=\min\{i>k_{s-1}:v_i\in Q\}\), \(j_s=\min\{i:v_{i_s}\in Q_i\}\), and \(k_s=\max\{i:v_{i}\in Q_{j_s}\}\), as well as the good segment \(\pi_s=(\boldsymbol{v}_{k_{s-1}+1},\dots, \boldsymbol{v}_{i_s-1})\), and the bad segment \(\sigma_s=(\boldsymbol{v}_{i_s -1},\dots,\boldsymbol{v}_{k_s+1})\) all the way up to \(S\) and \(T\), {see Figure~\ref{fig:path}.} 
    	
\begin{figure}
	\begin{center}
		\resizebox{0.4\textwidth}{!}{
			\begin{tikzpicture}[every node/.style={scale=0.6}, x=1cm, y=1cm]
  				\tikzset{
    				box/.style   ={thick},
   					path/.style  ={very thick, line cap=round},
    				obsfill/.style ={fill=gray!50},
    				obsline/.style ={thick},
    				lab/.style   ={font=\footnotesize}
  				}

  				\def\L{10}  
  				\draw[box] (0,0) rectangle (\L,\L);

  				\def\s{0.5}                
  				\coordinate (O) at (5.6,4.0); 

  				\path[obsfill] (O) rectangle ++(\s,\s);
  				\draw[obsline] (O) rectangle ++(\s,\s);
 		 		\node[] at ($(O)+(\s /2, \s/2)$) {$Q_1$};
  				\path[obsfill] ($(O)+(\s,0)$)  rectangle ++(\s,\s);
  				\node[] at ($(O)+(\s,0)+(\s /2, \s/2)$) {$Q_3$};
  				\path[obsfill] ($(O)+(0,\s)$) rectangle ++(\s,\s);
  				\node[] at ($(O)+(0,\s)+(\s /2, \s/2)$) {$Q_2$};

  				\path[name path=obsborder, obsline]
    							(O) -- ++(2*\s,0) -- ++(0,\s) -- ++(-\s,0) -- ++(0,\s) -- ++(-\s,0) -- cycle;

  				\draw[obsline] (O) -- ($(O)+(2*\s,0)$) -- ++(0,\s) -- ++(-\s,0)
                 				-- ++(0,\s) -- ++(-\s,0) -- cycle;

  				\path[obsfill] (1.0,7.6) rectangle +(0.5,0.5);
  				\draw[obsline] (1.0,7.6) rectangle +(0.5,0.5);
  				\node[] at ($(1.0,7.6)+(0.25,0.25)$) {$Q_4$};

  				\coordinate (S) at (1.2,1.0);   
  				\coordinate (T) at (9.7,9.0);    

  				\path[name path=curveFull]
    				(S) to[out=15,in=230] (3.0,1.8)
        				to[out=50,in=200] (4.9,3.3)
        				to[out=20,in=220] (5.5,3.7)
        				to[out=40,in=240] (6.8,4.7)
        				to[out=60,in=260] (7.9,6.1)
        				to[out=80,in=270] (9.8,7.5)
        				to[out=85,in=185] (T);

  				\path[name intersections={of=curveFull and obsborder, by={Iin,Iout}}];

  				\draw[path]
    				(S) to[out=15,in=230] (3.0,1.8)
        				to[out=50,in=200] (4.9,2.7)
        				to[out=20,in=220] (5.8,3.7)
        				to[out=40,in=240] (Iin);

  				\draw[path]
    				(Iout) to[out=60,in=260] (7.9,6.1)
           				to[out=80,in=270] (9.8,7.5)
           				to[out=85,in=185] (T);

  				\node[lab, left, scale =1.5]  at (S) {$x$};
  				\node[lab, right, scale =1.5] at (T) {$y$};
			\end{tikzpicture}	
		}
	\end{center}
	\caption{A path connecting \(x\) and \(y\) inside a large good box, with the bad sub boxes in grey. The path decomposes into a good segment \(\pi_1\) from \(x\) to the last vertex before entering \(Q_1\); the bad segment \(\sigma_1\), which is not shown in the picture, consisting of the path segment between the first entrance to \(Q_1\) and the last exit of \(Q_1\); as the path directly enters \(Q_3\) after leaving \(Q_1\), the good segment \(\pi_2\) is empty; the bad segment \(\sigma_2\), again not shown, consists of the path segment between the first entrance to \(Q_3\) and the last exit of \(Q_3\); the final good segment \(\pi_3\) then connects the vertex after leaving \(Q_3\) to \(y\). As the depicted part of the path is completely contained in good regions, no long edges can be used. }
	\label{fig:path}
\end{figure}
    	    	
    	Denote for a path \(\rho\) by \(\mathsf{dist}(\rho)\) the Euclidean distance between its two endpoints. The triangle inequality gives \(|x-y|\leq \sum_{1}^{9^d}\mathsf{dist}(\pi_s)+\sum_1^{9^d}\mathsf{dist}(\sigma_t)\). Since each \(Q_{j_s}\) (a bad stage-\((n-1)\) box) has diameter \(\sqrt{d}K_{n-1}\) and all edge lengths are bounded by \(K_{n-1}/100\), the contribution of any bad segment is bounded by    
    	\begin{equation*}
    		\begin{aligned}
    			\mathsf{dist}(\sigma_t) & \leq |v_{i_t-1}-v_{i_t}| + |v_{i_t}-v_{k_t}|+|v_{k_t}-v_{k_t+1}| \\& \leq K_{n-1}/100 + \sqrt{d}K_{n-1}+K_{n-1}/100 \leq 2 d K_{n-1}.
    		\end{aligned}
    	\end{equation*}
    Let \(I=\{s: \mathsf{dist}(\pi_s)>K_{n-1}/2\}\). Then trivially, \(\sum_{s\not\in I}\mathsf{dist}(\pi_s)\leq 9^d K_{n-1}/2\).
	Combined with the assumption \(|x-y|>K_n/16\) and the fact \(K_n=n^2 K_{n-1}\), we infer by use of the triangle inequality
	\begin{equation}\label{eq:distBoundRenorm}
		\begin{aligned}
			\sum_{s\in I}\mathsf{dist}(\pi_s) & \geq |x-y|- \sum_{t=1}^{9^d}\mathsf{dist}(\sigma_t)-\sum_{s\not\in I}\mathsf{dist}(\pi_s)\geq |x-y| - 2d \cdot  9^d K_{n-1} - \tfrac{9^d}{2} K_{n-1} 
			\\ &
			\geq |x-y|\big(1-\tfrac{N}{n^2}\big)  
		\end{aligned}
	\end{equation}
	by our choice of \(N\). 
	To conclude the proof, assume that for each \(s\in I\)
	\begin{equation}\label{eq:endRenormalisationID}
		\ell(\pi_s)\geq \mathsf{dist}(\pi_s)\cdot C' \Pi(n-1).
	\end{equation}
	{Indeed, if this was the case, the claim would follow by summing~\eqref{eq:endRenormalisationID} over the segments of \(I\) and applying~\eqref{eq:distBoundRenorm}. Hence, it remains to verify~\eqref{eq:endRenormalisationID}, which is also the part where we make use of the induction hypothesis (i.e.,~\eqref{eq:renormalisationInd} applies to \(n-1\)).} 
	
	For \(\boldsymbol{v},\w\in\pi_s\), we write \(\pi_s[\boldsymbol{v},\w]\) for the path segment from \(\boldsymbol{v}\) to \(\w\). Observe that there exists a collection \(\boldsymbol{v}_{k_{s-1}+1}=\w_1,\dots,\w_m=\boldsymbol{v}_{i_s -1}\) of vertices of \(\pi_s\), such that for every \(i\)
	\begin{description}
		\item[(1)] \(|w_{i+1}-w_i|>K_{n-1}/16\), and 
		\item[(2)] \(|v-w_i|<K_{n-1}/2\) for all \(\boldsymbol{v}\in \pi_s[\w_i,\w_{i+1}]\). 
	\end{description}
	That is, the \(\w_i\) divide the good segment \(\pi_s\) into sub segments \(\pi_s[\w_i,\w_{i+1}]\) such that the whole sub segment is contained in the ball of radius \(K_{n-1}/2\), centred at \(w_i\), but still a large Euclidean distance is bridged. This sequence can for instance be constructed using a greedy algorithm~\cite[Prop.~2.6]{komjathy2023FPP}. To this end, one assumes inductively that \(\w_1,\dots,\w_i\) have already been found and the segment \(\pi_s\) is not fully covered yet. Then, the remaining path is either contained in the ball of radius \(K_{n-1}/2\), centred in \(\boldsymbol{v}_{i_{s}-1}\) (the last vertex on \(\pi_s\)), in which case we choose \(\w_{i+1}=\boldsymbol{v}_{i_s-1}\), or this is not the case and we follow the path and pick \(\w_{i+1}\) to be the first vertex that fulfils both properties. Note that this is always possible as \(\pi_s\) is a good path segment and the assumption implies that either \(\boldsymbol{v}_{i_s-1}\) is at a far distance from \(\w_i\) or the paths wanders far off before reaching \(\boldsymbol{v}_{i_s-1}\). 	Now, by Property~\textbf{(2)} and the choice of \(\pi_s\), there exists a good stage-\((n-1)\) box containing \(\pi_s[\w_i,\w_{i+1}]\). By~\textbf{(1)} and the induction hypothesis~\eqref{eq:renormalisationInd} for \(n-1\), we have
	\[
		\ell(\pi_s[\w_i,\w_{i+1}])\geq \mathsf{dist}(\pi_s[\w_i,\w_{i+1}])\cdot C' \Pi(n-1),
	\]
	using \(|w_i-w_{i+1}|=\mathsf{dist}(\pi_s[\w_i,\w_{i+1}])\). Using \(\ell(\pi_s)=\ell(\pi_s[\w_1,\w_2])+\dots+\ell(\pi_s[\w_{m-1},\w_m])\) together with the triangle inequality, thus proves the claim~\eqref{eq:endRenormalisationID}. This concludes the proof of Lemma~\ref{lem:renormPraep}.
	\end{proof}

	Using Lemma~\ref{lem:renormPraep}, we may now proceed and prove Proposition~\ref{prop:Renorm}.
	
    \begin{proof}[Proof of Proposition~\ref{prop:Renorm}.]	 
	Recall the notation introduced in the previous lemma. Set \(C_3:=C'\Pi(\infty)\), let \(\x,\y\in\scrV\) such that \(|x-y|>K_{n-1}/8\) and \(B_{n-1}(x)\), \(B_n(x)\) are good, and let \(\pi\) be a path connecting \(\x\) and \(\y\) within \(B_n(x)\). {Note that we can apply the decomposition into good and bad segments of the previous proof to \(\pi\). Let us consider two different cases, depending on the distance \(|x-y|\). We start with the case \(|x-y|\geq 2N K_{n-1}\). In that case, we can repeat Calculation~\eqref{eq:distBoundRenorm} to obtain 
	\begin{equation}\label{eq:distBoundForEnd}
		\sum_{s\in I}\mathsf{dist}(\pi_s)\geq |x-y|-N K_{n-1}\geq |x-y|/2,
	\end{equation}
	Furthermore,~\eqref{eq:endRenormalisationID} is a direct consequence of the path decomposition and the proven statement~\eqref{eq:renormalisationInd}. It therefore applies to \(\x,\y\) and either the good box \(B_{n-1}(x)\) or the good box \(B_n(x)\), depending on the precise location of \(\y\). In either case, we infer since \(\Pi(n)\) is decreasing in \(n\),   
	\[
		\ell(\pi)\geq C_3 \sum_{s\in I} \mathsf{dist}(\pi_s) \geq \frac{C_3}{2}|x-y|.
	\]}
	
	If instead \(|x-y|< 2N K_{n-1}\), then by the fact that \(B_n(x)\) is good and no edge is longer than \(K_{n-1}/100\), there must exists a vertex \(\boldsymbol{v}\in \pi\), such that \(K_{n-1}/16<|x-v|\leq K_{n-1}/8\). If \(\boldsymbol{v}\) is the first such vertex on the path, then additionally \(\pi[\x,\boldsymbol{v}]\subset B_{n-1}(x)\). By assumption, the box \(B_{n-1}(x)\) is good and we can apply~\eqref{eq:renormalisationInd} and the bounds on the Euclidean distance to obtain
	\begin{equation*}
		\begin{aligned}
			\ell(\pi)\geq \ell(\pi[\x,\boldsymbol{v}])\geq C_3 |x-v| \geq C_3 \frac{K_{n-1}}{16} \geq C_3 \frac{|x-y|}{32 N}. 
		\end{aligned}
	\end{equation*} 
	Hence, the claim follows for \(C_1=C_3/32N\). 
	\end{proof}

\paragraph{Acknowledgement.} I would like to thank Peter Gracar and Jonas Köppl, for their valuable support and helpful discussions.

\paragraph{Funding information.} This research was supported by the Leibniz Association within the Leibniz Junior Research Group on Probabilistic Methods for Dynamic Communication Networks as part of the Leibniz Competition (grant no. J105/2020).

\section*{References} 
\renewcommand*{\bibfont}{\footnotesize}
\printbibliography[heading = none]
\end{document}